\DeclarePairedDelimiter\abs{\lvert}{\rvert}
\DeclareMathOperator{\Span}{span}
\newcommand{\dis}{\displaystyle}
\renewcommand{\leq}{\leqslant}
\renewcommand{\geq}{\geqslant}
\newcommand{\R}{\mathbb{R}}
\newcommand{\C}{\mathbb{C}}
\newcommand{\N}{\mathbb{N}}
\renewcommand{\P}{\mathbb{P}}
\newcommand{\e}{\varepsilon}
\theoremstyle{plain}
\newtheorem{theorem}{Theorem}[section]
\newtheorem{proposition}[theorem]{Proposition}
\newtheorem{lemma}[theorem]{Lemma}
\theoremstyle{definition}
\newtheorem{definition}[theorem]{Definition}
\crefname{proposition}{Proposition}{Propositions}
\crefname{lemma}{Lemma}{Lemmas}
\crefname{theorem}{Theorem}{Theorems}
\let\oldabs\abs
\def\abs{\@ifstar{\oldabs}{\oldabs*}}
\title{\textbf{Double algebraic genericity of universal harmonic functions on trees}}
\author{C. A. Konidas}
\date{\vspace{-5ex}}
\begin{document}
\pagestyle{myheadings}
\markboth{Double algebraic genericity of universal functions on trees}{C. A. Konidas}
\maketitle
\begin{abstract}
\noindent It is well known that the set of universal functions on a tree contains a vector space except zero which is dense in the set of harmonic functions.
In this paper we improve this result by proving that the set of universal functions on a tree contains two vector spaces except zero which are dense in the space of harmonic functions and intersect only at zero.
\end{abstract}
{\em AMS classification numbers}: 05C05, 60J45, 60J50, 30K99, 46M99\smallskip\\
{\em Keywords and phrases}: Tree, boundary of a tree, harmonic functions, universal functions, algebraic genericity

\section{Introduction}

Let $T$ be the set of vertices of a rooted tree with root $x_0$.
For each $n \in \N$ we define $T_n$ to be the set of all vertices at distance $n$ from $x_0$ and we also define $T_0 = \{x_0\}$.
Given a vertex $x \in T$, we shall write $S(x)$ for the set of the children of x.
We assume that all $T_n$ are finite and for every $x \in T$ the set $S(x)$ has at least two elements.
Since $T = \bigcup_{n=0}^{\infty}{T_n}$ and each $T_n$ is finite and nonempty, the set $T$ is infinite denumerable.
With each $x \in T$ and $y \in S(x)$ we associate a real number $q(x,y) > 0$, which we think of as the probability of transition from vertex $x$ to vertex $y$, such that
$$\sum_{y \in S(x)}{q(x,y)} = 1.$$

We define the boundary of $T$, denoted by $\partial T$, as the set of all infinite geodesics originating from $x_0$.
More specifically, an element $e \in \partial T$ is of the form $e = \{z_n \in T: n \in \N\}$, where $z_1 = x_0$ and $z_{n+1} \in S(z_n)$ for all $n \in \N$.
For each $x \in T$ we define the boundary sector of $x$ as $B_x = \{ e\in \partial T: x \in e\}$.
Notice that $\{B_x: x \in T_n\}$ is a partition of $\partial T$ for all $n \in \N$.
For each $x \in T\setminus \{x_0\}$, we have that $x \in T_n$ for some $n \in \N$ and we assign probability $p(B_x) = \prod_{j=0}^{n-1}{q(y_j,y_{j+1})}$ where $y_0 = x_0, y_n = x$ and $y_{j+1} \in S(y_j)$ for all $j \in \{1,\dots,n\}$.
For each $n \in \N$ we consider $\mathcal{M}_n$ to be the $\sigma$-algerba on $\partial T$ generated by $\{B_x: x  \in T_n\}$, which is a finite partition of $\partial T$ and therefore a function $h:\partial T \to \C$ is $\mathcal{M}_n$-measurable if and only if it is constant on every $B_x$ for $x \in T_n$.
We can extend $p(B_x)$ to a probability measure $\P_n$ on the measurable space $(\partial T, \mathcal{M}_n)$.
One can easily check that $\mathcal{M}_n \subseteq \mathcal{M}_{n+1}$ and $\P_{n+1}|_{\mathcal{M}_n} = \P$ for all $n \in \N$.
By Kolmogorov's Consistency Theorem there exists a probability measure $\P$ on the measurable space $(\partial T, \mathcal{M})$, where $\mathcal{M}$ is the completion of the $\sigma$-algebra on $\partial T$ generated by the set $\bigcup_{n=1}^{\infty}{\mathcal{M}_n}$, such that $\P|_{\mathcal{M}_n} = \P_n$ for all $n \in \N$.
A function $h: \partial T \to \C$ is $\mathcal{M}$-measurable if and only if for every open set $V \subseteq \C$ we have $h^{-1}(V) \in \mathcal{M}$.
By identifying almost everywhere equal $\mathcal{M}$-measurable functions we obtain the space $L^0(\partial T, \C)$ or $L^0(\partial T)$ which we endow with the metric
$$P(\psi,\phi) = \int_{\partial T}{\frac{\abs{\psi(e) - \phi(e)}}{1+\abs{\psi(e) - \phi(e)}}} \, d\P(e),$$
where $\psi, \phi \in L^0(\partial T)$.
The induced topology is that of convergence in probability.

We consider the space $\C^{T}$ endowed with the metric
$$\rho(f,g) = \sum_{n=1}^{\infty}{\frac{1}{2^n}\frac{\abs{f(z_n) - g(x_n)}}{1+\abs{f(z_n) - g(z_n)}}},$$
where $f,g \in \C^{T}$ and $\{z_n: n \in \N\}$ is a enumeration of $T$.
The induced topology is that of pointwise convergence.
A function $f \in \C^T$ is called harmonic if and only if for all $x \in T$ it holds that
$$f(x) = \sum_{y \in S(x)}{q(x,y)f(y)}.$$
The set of all harmonic functions shall be denoted by $H(T,\C)$ or $H(T)$.
For every $f \in H(T)$ and $n \in \N$ we define a function $\omega_n(f): \partial T \to \C$ by $\omega_n(f)(e) = f(z)$, where $z$ is the unique element of $e \cap T_n$ and $e \in \partial T$.
Notice that $\omega_n(f)$ is constant on each $B_x$ for $x \in T_n$ and thus it is $\mathcal{M}_n$-measurable.
A harmonic function $f \in H(T)$ is called universal if and only if the sequence $\{\omega_n(f)\}_{n=1}^{\infty}$ is dense in $L^0(\partial T)$. The set of universal functions shall be denoted by $U(T,\C)$ or $U(T)$.
The following theorem was proved in \cite{BIEHLER2}.

\begin{theorem}
The set $U(T) \cup \{0\}$ contains a vector space which is dense in $H(T)$, that is we have algebraic genericity for the set $U(T)$.
\end{theorem}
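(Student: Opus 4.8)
The plan is to realise the required space as the algebraic linear span of an explicitly constructed sequence $(f_k)_{k\ge1}\subseteq H(T)$, built by a single level-by-level induction. First I would record the ambient structure. The space $H(T)$ is a closed linear subspace of the separable Fréchet space $(\C^T,\rho)$, since harmonicity is a pointwise-closed family of linear conditions; hence $H(T)$ is itself a separable Fréchet space, each $\omega_n\colon H(T)\to L^0(\partial T)$ is linear and continuous, and $L^0(\partial T)$ is separable. Moreover the metric satisfies $P(\psi,\phi)=P(\psi-\phi,0)$, so $P$ is translation invariant and subadditive, and multiplication by a fixed nonzero scalar is a homeomorphism of $L^0(\partial T)$. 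I fix once and for all a dense sequence $(\phi_j)_j$ in $L^0(\partial T)$ and a dense sequence $(d_m)_m$ in $H(T)$.

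The engine of the construction is the following extension lemma, which is exactly the mechanism already underlying the nonemptiness of $U(T)$: given a level $m$, consistent harmonic values prescribed for finitely many functions on $T_0\cup\dots\cup T_m$, targets $\psi_1,\dots,\psi_K\in L^0(\partial T)$ and $\e>0$, there exist a level $n>m$ and a harmonic extension of each prescribed function to $T_0\cup\dots\cup T_n$ with $P(\omega_n(f_i),\psi_i)<\e$ for every $i\le K$ simultaneously. For a single function this follows by approximating $\psi_i$ by an $\mathcal M_n$-measurable simple function at a deep level and then using the freedom coming from $\abs{S(x)}\ge2$ at each vertex to correct the prescribed parent-averages on an arbitrarily small-measure set of deep sub-sectors, the correction being invisible for convergence in probability; several functions are handled at a common $n$ because their level-$n$ values may be chosen independently. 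I expect this lemma to be the main technical obstacle, and the point where the standing hypotheses (branching at least two, convergence in probability) are essential.

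With the lemma in hand, the construction proceeds in stages along an increasing sequence of top levels, performing interleaved bookkeeping. For density I would ensure that for each $m$ some $f_k$ is born equal to $d_m$ on a large enough initial block of levels and is never altered there, so that $\rho(f_k,d_m)$ is as small as desired and hence $\overline{\Span}\{f_k\}=H(T)$. For universality I would enumerate all triples $(r,j,t)$ and, at a stage assigned to such a triple, apply the extension lemma at a fresh deep level $n$ to arrange simultaneously $P(\omega_n(f_r),\phi_j)<1/t$ together with $P(\omega_n(f_i),0)<1/t$ for every $i<r$; call this requirement $(\ast)$. Since each approximation is permanently fixed at its level $n$ while later stages touch only deeper levels, $(\ast)$ is secured for infinitely many $n$ for every triple.

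Finally I would verify that every nonzero $g=\sum_{i=1}^{r}c_if_i$ with $c_r\ne0$ lies in $U(T)$, which incidentally forces linear independence of the $f_k$. Given a target $\phi_j$ and $\e>0$, choose by density an index $j'$ with $P(c_r\phi_{j'},\phi_j)<\e/3$ (possible since $\psi\mapsto c_r\psi$ is a homeomorphism), then pick among the levels furnished by $(\ast)$ for triples $(r,j',t)$ with $t$ large one so deep that $P(c_r\omega_n(f_r),c_r\phi_{j'})<\e/3$ and $P\big(\sum_{i<r}c_i\omega_n(f_i),0\big)<\e/3$; the latter uses that each $\omega_n(f_i)\to0$ in probability along these levels, hence so does the fixed finite combination. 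Writing $\omega_n(g)-\phi_j$ as the sum of $\sum_{i<r}c_i\omega_n(f_i)$, of $c_r\omega_n(f_r)-c_r\phi_{j'}$, and of $c_r\phi_{j'}-\phi_j$, translation invariance and subadditivity of $P$ give $P(\omega_n(g),\phi_j)<\e$. Thus $\{\omega_n(g)\}_n$ is dense in $L^0(\partial T)$, so $g\in U(T)$; since $g$ was an arbitrary nonzero element of the dense space $\Span\{f_k\}$, we obtain $\Span\{f_k\}\subseteq U(T)\cup\{0\}$, which is the claimed dense vector space and completes the proof.
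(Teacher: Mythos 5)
The paper does not actually prove this theorem: it is quoted from \cite{BIEHLER2} and used as a black box, so the only fair comparison is with the technique the paper deploys for the parallel result on $X(T,E)$ (\cref{lem:linearity}, \cref{lem:density}, \cref{thm:Xalg}), which is also the style of argument in \cite{BIEHLER2}. That route is ``soft'': one first knows by a Baire-category argument that the relevant class of $E^{\N}$-valued harmonic functions is $G_\delta$ and dense, picks a single $f=(f_1,f_2,\dots)$ in it whose coordinates are (after a small harmonic perturbation) dense in $H(T)$, and then a linearity lemma with product-type open sets $\frac{1}{b_1}B(0,\delta)\times\cdots\times\frac{1}{b_s}B(h,\delta)\times L^0(\partial T)\times\cdots$ shows the span lies in the class. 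Your proof is a genuinely different, ``hard'' route: an explicit level-by-level induction with bookkeeping over triples $(r,j,t)$, securing $P(\omega_n(f_r),\phi_j)<1/t$ together with $P(\omega_n(f_i),0)<1/t$ for $i<r$ at infinitely many levels, and then estimating a general combination by the triangle inequality. Your verification step is correct (note that the quantitative passage from $P(\omega_n(f_r),\phi_{j'})<1/t$ to $P(c_r\omega_n(f_r),c_r\phi_{j'})<\e/3$ uses $P(c\psi,c\phi)\leq\max(1,\abs{c})\,P(\psi,\phi)$, which holds for the metric $P$ at hand), and your construction buys self-containedness at the price of redoing by hand the approximation that the category argument gives for free; the soft approach generalizes more cleanly to other classes such as $U_{FM}$ and $X$, which is exactly how the paper exploits it.

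The one place where your argument genuinely rests on an unproved claim is the extension lemma, and there a quantitative point must be made explicit: after replacing $\psi_i$ by an $\mathcal{M}_N$-measurable simple function, you must produce a harmonic extension from level $m$ down to level $n$ agreeing with the target except on a set of small $\P$-measure, while respecting the prescribed averages. The natural mechanism is to designate at each vertex one ``correction'' child that absorbs the discrepancy and to recurse only inside its sector; but the measure of the nested bad sectors $B_{y_1}\supseteq B_{y_2}\supseteq\cdots$ is $p(B_x)\prod_k q(y_{k-1},y_k)$, which need \emph{not} tend to zero for an arbitrary choice of children (the transition probabilities are only assumed positive). You must use $\abs{S(x)}\geq 2$ to pick at each step a child with $q\leq 1/2$, so that the bad measure decays at least like $2^{-(n-N)}$. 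With that fix your sketch closes, and the rest of the proof is sound.
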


In this paper we will improve this result by proving the following theorem.

\begin{theorem}
\label{thm:wanted}
There exist two vector spaces $F_1,F_2$ contained in $U(T)\cup\{0\}$ which are dense in $H(T)$ such that $F_1 \cap F_2 = \{0\}$, that is we have double algebraic genericity for the set $U(T)$.
\end{theorem}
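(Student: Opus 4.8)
The plan is to reduce \Cref{thm:wanted} to the construction of a single sequence $(u_j)_{j\in\N}$ of harmonic functions with two features: first, every nonzero finite linear combination $f_c := \sum_j c_j u_j$ (with $c_j\in\C$) lies in $U(T)$; and second, both subfamilies $\{u_{2j}:j\in\N\}$ and $\{u_{2j-1}:j\in\N\}$ are dense in $H(T)$. Granting this, set $F_1=\Span\{u_{2j}:j\in\N\}$ and $F_2=\Span\{u_{2j-1}:j\in\N\}$. Each is dense in $H(T)$ by the second feature, and each nonzero element is universal by the first, so $F_1,F_2\subseteq U(T)\cup\{0\}$. Moreover the first feature forces the whole family to be linearly independent, since a nontrivial relation would exhibit a nonzero $c$ with $f_c=0$, whose images $\omega_n(0)\equiv 0$ are constant and thus not dense in $L^0(\partial T)$; the same remark gives $F_1\cap F_2=\{0\}$, as any common element produces a combination of even- and odd-indexed $u_j$'s equal to zero. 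Thus linear independence and trivial intersection come for free once universality of all nonzero combinations is secured.

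The technical engine is an approximation lemma handling finitely many functions at once. A function is harmonic exactly when $(\omega_n(f))_n$ is a martingale for $(\mathcal{M}_n)$, so prescribing the level-$T_n$ values of $f$ is constrained only by the requirement that the $q$-weighted average over $S(x)$ reproduce $f(x)$ for each $x\in T_{n-1}$; since $\abs{S(x)}\geq 2$, this leaves ample freedom. Concretely, given $u_1,\dots,u_m$ already fixed up to a level $M$, a nonzero $c\in\C^m$, a simple $\mathcal{M}_N$-measurable target $\phi$, and $\e>0$, one can harmonically extend the $u_j$ to a deeper level $n>M$ with $P\!\left(\omega_n\!\big(\textstyle\sum_j c_j u_j\big),\phi\right)<\e$. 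The mechanism is the exceptional-set device underlying the single-function lemma of \cite{BIEHLER2}: on a sector-set of probability exceeding $1-\delta$ one assigns the level-$n$ values of a single coordinate $u_{j_0}$ with $c_{j_0}\neq 0$ so that the combination equals $\phi$ there, while on a complementary set of probability below $\delta$ the values of all $m$ functions are adjusted to repair the finitely many averaging constraints back to level $M$. As this exceptional set has small measure and convergence is in probability, its effect on $P$ is negligible.

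The decisive step is to pass from a countable dense set of coefficient vectors to \emph{all} of them, and here I will use compactness. For fixed $m$, a target $\phi_l$ from a countable dense subset of $L^0(\partial T)$, and precision $1/p$, the set
\[
U_{l,p}=\Big\{c\in\C^m:\exists\,n,\ P\big(\omega_n(\textstyle\sum_j c_j u_j),\phi_l\big)<\tfrac1p\Big\}
\]
is open, because $c\mapsto\omega_n(f_c)$ is linear, hence continuous, from $\C^m$ into $L^0(\partial T)$. At the construction stage devoted to $(m,l,p)$ I cover the unit sphere of $\C^m$ by finitely many balls $B(c^{(i)},\delta)$ and apply the approximation lemma successively at distinct fresh levels $n_1<\dots<n_r$ (each extension building on the previous one, hence not disturbing the closeness already secured at shallower levels) to arrange $P(\omega_{n_i}(f_{c^{(i)}}),\phi_l)<\tfrac1{2p}$ for every centre; continuity then yields $P(\omega_{n_i}(f_c),\phi_l)<\tfrac1p$ for all $c\in B(c^{(i)},\delta)$, so the entire sphere lies in $U_{l,p}$. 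Since $\omega_n(f_{\lambda c})=\lambda\,\omega_n(f_c)$ and multiplication by $\lambda\neq0$ is a homeomorphism of $L^0(\partial T)$, universality of $f_c$ depends only on the direction $c/\abs{c}$; covering the sphere for every pair $(l,p)$ therefore makes $f_c$ universal for every nonzero $c\in\C^m$. Interleaving these universality stages (one per triple $(m,l,p)$, performed once $u_1,\dots,u_m$ exist) with density stages (introducing each new $u_j$ to agree with some $h_k$ from a fixed countable dense subset $\{h_k\}$ of $H(T)$ on an initial segment of the vertex enumeration, alternating parity so that every $h_k$ is approximated by infinitely many even- and odd-indexed functions) yields the required sequence. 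Every stage freezes the functions on a finite initial portion of $T$ and only extends them deeper, so each $u_j$ is well defined vertex by vertex and harmonic.

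The main obstacle is precisely this upgrade from countably many directions to the continuum of all nonzero $c$: a construction handling only rational directions would leave each $U_{l,p}$ merely dense rather than all-of-the-sphere, and a single surviving non-universal direction would destroy both the universality of $F_1,F_2$ and, through a vanishing combination, the relation $F_1\cap F_2=\{0\}$. The compactness argument, powered by the vector-valued approximation lemma and the continuity of $c\mapsto\omega_n(f_c)$, is what closes this gap. Checking that the successive deep extensions within one stage do not interfere, and organizing the bookkeeping so that all triples $(m,l,p)$ together with all density requirements are eventually met while each $u_j$ stabilizes on every vertex, are the remaining points, delicate but routine.
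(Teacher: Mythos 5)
Your route is genuinely different from the paper's. The paper never tries to make every nonzero combination of a single family universal by direct construction; instead it uses two sub-classes of $U(T,E)$ that are disjoint for structural reasons --- the frequently universal functions $U_{FM}(T,E)$ (every nonempty open set visited with positive lower density) and the class $X(T,E)$ (every nonempty open set visited with upper density one, hence every non-dense open set with lower density zero) --- and proves algebraic genericity for each separately, so $F_1\cap F_2=\{0\}$ costs nothing. The continuum-of-directions problem you correctly identify as the crux is dissolved there by a change of viewpoint: one takes a single vector-valued function $f=(f_1,f_2,\dots)\in X(T,E^\N)$ and, for an arbitrary combination $a_1f_1+\dots+a_sf_s$ with $a_s\neq 0$, pulls a given ball $B(h,\e)$ back to the product-open set $\frac{1}{b_1}B(0,\delta)\times\cdots\times\frac{1}{b_s}B(h,\delta)\times L^0(\partial T,E)\times\cdots$; the membership $f\in X(T,E^\N)$ quantifies over \emph{all} open subsets of the product at once, so all coefficient vectors are handled simultaneously with no net and no compactness.

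Against that benchmark, your compactness step has a genuine quantifier-order gap as written. You fix the radius $\delta$ of the finite cover of the unit sphere before the construction at the levels $n_1<\dots<n_r$, but the transfer $P(\omega_{n_i}(f_c),\omega_{n_i}(f_{c^{(i)}}))<\tfrac{1}{2p}$ for all $c\in B(c^{(i)},\delta)$ requires a modulus of continuity for $c\mapsto\omega_{n_i}(f_c)$ that depends on the functions $\omega_{n_i}(u_j)$ themselves --- in particular on the correction values produced by the exceptional-set device, which blow up when the designated correction child has small transition probability and which propagate into the starting data of the next substage. So the admissible $\delta$ is only known \emph{after} the extension it is supposed to govern; and choosing radii greedily, one ball at a time with each radius computed a posteriori, need not cover the sphere in finitely many steps. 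I believe this is repairable --- e.g.\ by strengthening the approximation lemma so that at level $n_i$ the coordinates $u_j$ with $j\neq j_0$ are \emph{exactly} $0$ off a set of measure less than $\eta$, whence $P(c_j\omega_{n_i}(u_j),0)<\eta$ uniformly over all bounded $c_j$, and by noting that the modulus in the $j_0$-th coordinate then depends only on $\phi_l$, $p$ and the bound $\abs{c^{(i)}_{j_0}}\geq 1/\sqrt{m}$ --- but that argument is not in your proposal, and without it the ``single surviving non-universal direction'' you warn about is not excluded. The rest of the reduction (evens and odds, linear independence and $F_1\cap F_2=\{0\}$ deduced from universality of all nonzero combinations, homogeneity reducing the problem to the sphere) is correct.
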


In order to prove this we will consider two particular subsets of $U(T)$ which have been studied before in \cite{ABAKUMOV2,ABAKUMOV1,BIEHLER2,BIEHLER1}.
We say that a harmonic function $f:T \to \C$ is frequently universal if and only if for every nonempty open set $V \subseteq L^0(\partial T)$ the set $\{n \in \N: \omega_n(f) \in V \} $ has strictly positive lower density.
The set of frequently universal functions shall be denoted by $U_{FM}(T,\C)$ or $U_{FM}(T)$.
We say that a harmonic function $f:T \to \C$ belongs to the class $X(T,\C)$ or $X(T)$ if and only if for every nonempty open set $V \subseteq L^0(\partial T)$ the set $\{n \in \N: \omega_n(f) \in V \} $ has upper density equal to one.
Algebraic genericity for the set $U_{FM}(T)$ and the fact that $U_{FM}(T)$ and $X(T)$ are disjoint have been proven in \cite{BIEHLER1}.
So, in order to prove \cref{thm:wanted}, we will prove algebraic genericity for the set $X(T)$.

\section{A more general setting}
We will provide a more general proof of our result.
In particular we will replace $\C$ by any normed space over $\C$ and we will consider a generalized definition of harmonic functions introduced in \cite{BIEHLER1}.
However, in this section we will not assume that $E$ is a normed space as we will use some of the following results when considering a space in which the metric is not induced by a norm.

Let $E$ be a separable topological vector space over $\C$, that is metrizable with a metric $d$.
We consider the space $E^T$ with the metric
$$\rho(f,g) = \sum_{n=1}^{\infty}{\frac{1}{2^n}\frac{d(f(z_n), g(z_n))}{1+d(f(z_n), g(z_n))}},$$
where $f,g \in E^T$ and $\{z_n : n\in \N\}$ is a enumeration of $T$.
Since $E$ is separable and $T$ denumerable, the metric space $(E^T, \rho)$ is separable.
\begin{definition}
A function $f:T \to E$ is called generalized harmonic if and only if for all $x \in T$ it holds that
$$f(x) = \sum_{y \in S(x)}{w(x,y)f(y)},$$
where $w(x,y) \in \C\setminus \{0\}$ such that $\sum_{y \in S(x)}{w(x,y)} = 1$. The set of generalized harmonic functions shall be denoted by $H(T,E)$.
\end{definition}

From now we will drop the word generalized and refer to the elements of $H(T,E)$ simply as harmonic functions. Notice that the set $H(T,E)$ is a separable metric space as a metric subspace of the separable metric space $(E^T,\rho)$.

A function $h: \partial T \to E$ is $\mathcal{M}$-measurable if and only if for every open set $V \subseteq E$ we have that $h^{-1}(V) \in \mathcal{M}$.
By identifying almost everywhere equal $\mathcal{M}$-measurable functions we obtain the space $L^{0}(\partial T, E)$.
We endow $L^{0}(\partial T, E)$ with the metric
$$P(\psi,\phi) = \int_{\partial T}{\frac{d(\psi(e), \phi(e))}{1+d(\psi(e), \phi(e))}} \, d\P(e),$$
where $\psi,\phi \in L^{0}(\partial T, E)$.
The topology induced by $P$ is that of convergence in probability.
Because $E$ is separable and by construction of the measure space $(\partial T, \mathcal{M}, \P)$ there exists a dense sequence $\{h_n\}_{n=1}^{\infty}$ in $L^0(\partial T, E)$, such that each $h_n$ is $\mathcal{M}_{k(n)}$-measurable for some $k(n) \in \N$.

In general, the set of all measurable functions from a measurable space to some metrizable topological vector space needs not be a vector space.
However, in this setting we have the following proposition concerning the structure of $L^0(\partial T, E)$.

\begin{proposition}
\label{prop:struc}
Let $E$ be a separable topological vector space over $\C$, metrizable with a metric $d$.
The following are true.
\begin{enumerate}[(i)]
\item The space $L^0(\partial T, E)$ contains no isolated points.
\item The set $L^0(\partial T, E)$ when considered with pointwise addition and $\C$-scalar multiplication is a vector space.
\item If the metric $d$ of $E$ is translation invariant, then addition on $L^0(\partial T, E)$ is continuous.
\item If the metric $d$ of $E$ is induced by a norm, then scalar multiplication on $L^0(\partial T, E)$ is continuous.
\end{enumerate}
\end{proposition}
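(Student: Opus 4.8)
The plan is to treat the four assertions in turn, with (ii) as the conceptual heart since, as already noted, measurability is not automatically preserved by the vector operations and the separability of $E$ is exactly what rescues it. For (i), I would exploit the existence of boundary sectors of arbitrarily small positive measure. Since every $S(x)$ has at least two elements, $\abs{T_n} \geq 2^n$, and because $\{B_x : x \in T_n\}$ partitions $\partial T$ we have $\sum_{x \in T_n} \P(B_x) = 1$; hence some $x \in T_n$ satisfies $0 < \P(B_x) \leq 2^{-n}$. Given $\psi \in L^0(\partial T, E)$ and $\e > 0$, fix a nonzero vector $v \in E$ (here we use $E \neq \{0\}$), choose $n$ with $2^{-n} < \e$, pick such an $x \in T_n$, and define $\phi$ to equal $\psi + v$ on $B_x$ and $\psi$ elsewhere. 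Then $\phi$ is measurable, $\phi \neq \psi$ in $L^0$ because they differ on the set $B_x$ of positive measure, and since $t \mapsto t/(1+t)$ is bounded by $1$ we get $P(\psi,\phi) \leq \P(B_x) < \e$; thus $\psi$ is not isolated.

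For (ii), the only nontrivial point is that $\psi + \phi$ and $\lambda\psi$ are again measurable. Scalar multiplication is immediate: $x \mapsto \lambda x$ is continuous on $E$, so $\lambda\psi$ is the composition of a measurable map with a continuous one and is measurable. For the sum I would use that, $E$ being separable and metrizable, the Borel $\sigma$-algebra of $E \times E$ coincides with the product of the Borel $\sigma$-algebras of the factors; consequently $e \mapsto (\psi(e), \phi(e))$ is measurable into $E \times E$. Composing with the continuous, hence Borel, addition map $E \times E \to E$ shows that $\psi + \phi$ is measurable. The remaining vector-space axioms hold pointwise, so closure under the operations is all that must be checked, and separability is precisely the hypothesis that makes the product-$\sigma$-algebra identity, and therefore the whole argument, go through.

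For (iii) and (iv) I would verify sequential continuity. If $d$ is translation invariant, then for $\psi_n \to \psi$ and $\phi_n \to \phi$ in $P$ the triangle inequality together with translation invariance gives $d(\psi_n + \phi_n, \psi + \phi) \leq d(\psi_n, \psi) + d(\phi_n, \phi)$ pointwise; since $t \mapsto t/(1+t)$ is increasing and subadditive, integrating yields $P(\psi_n + \phi_n, \psi + \phi) \leq P(\psi_n, \psi) + P(\phi_n, \phi) \to 0$, which proves continuity of addition. For (iv), where $d$ comes from a norm $\lVert \cdot \rVert$, I would work with the equivalent characterisation of convergence in probability. From $\lVert \lambda_n \psi_n - \lambda \psi \rVert \leq \abs{\lambda_n}\,\lVert \psi_n - \psi \rVert + \abs{\lambda_n - \lambda}\,\lVert \psi \rVert$ one controls the two terms separately: the sequence $\{\lambda_n\}$ is bounded, so the first term tends to $0$ in probability together with $\lVert \psi_n - \psi \rVert$, while $\lVert \psi \rVert$ is almost everywhere finite, so the second term tends to $0$ in probability as $\lambda_n \to \lambda$. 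The homogeneity of the norm is used decisively here, which is why a merely translation-invariant metric does not suffice for scalar multiplication.

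I expect the main obstacle to be part (ii): the identification of $\mathrm{Borel}(E \times E)$ with the product $\sigma$-algebra, valid thanks to separability, and the passage through the continuous operation, which together convert a statement that is false for general metrizable targets into a true one in our setting. The other three parts are then essentially estimates built on the boundedness, monotonicity and subadditivity of $t \mapsto t/(1+t)$ and, in (iv), on the homogeneity of the norm.
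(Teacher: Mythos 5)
Your proposal is correct, but it proves part (ii) by a genuinely different route from the paper, and it actually supplies arguments for parts the paper defers or omits. For (ii) the paper does not invoke the identity $\mathrm{Borel}(E\times E)=\mathrm{Borel}(E)\otimes\mathrm{Borel}(E)$; instead it uses the dense sequence $\{h_n\}$ of $\mathcal{M}_{k(n)}$-measurable functions (i.e.\ functions constant on the boundary sectors $B_x$, $x\in T_{k(n)}$) to extract sequences $f_n\to f$ and $g_n\to g$ converging pointwise almost everywhere, observes that $f_n+g_n$ is again constant on sectors and hence $\mathcal{M}_{r(n)}$-measurable, and concludes by the fact that an a.e.\ pointwise limit of measurable functions into a metric space is measurable because $(\partial T,\mathcal{M},\P)$ is complete. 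Your product-$\sigma$-algebra argument is the more standard one, is shorter, and avoids any appeal to completeness of the measure; the paper's argument is more hands-on and exploits the specific combinatorial structure of $L^0(\partial T,E)$ (approximation by sector-constant functions), which is machinery it has set up anyway and reuses elsewhere. For (i) the paper merely cites a lemma of an earlier reference, whereas your small-sector perturbation argument is a complete and correct proof (note only that it needs $E\neq\{0\}$, as you say, and that the measurability of $\phi=\psi+v\mathbf{1}_{B_x}$ should be checked directly rather than via (ii) to avoid circularity --- this is immediate since $\phi^{-1}(V)=(B_x\cap(\psi+v)^{-1}(V))\cup(B_x^{\,c}\cap\psi^{-1}(V))$). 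For (iii) and (iv) the paper omits the proofs as routine, and your subadditivity estimate for $t\mapsto t/(1+t)$ and your splitting $\lVert\lambda_n\psi_n-\lambda\psi\rVert\leq\abs{\lambda_n}\lVert\psi_n-\psi\rVert+\abs{\lambda_n-\lambda}\lVert\psi\rVert$ are exactly the intended standard arguments.
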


\begin{proof}
\begin{enumerate}[(i)]
\item This was proved as a Lemma in \cite{BIEHLER2}, without using any of the other claims stated in this proposition.

\item It suffices to prove that if $f:\partial T\to E$ and $g: \partial T \to E$ are two $\mathcal{M}$-measurable functions and $a \in \C$ is a complex number, then the functions $f + g$ and $af$ are $\mathcal{M}$-measurable.
Since the set $\{h_n : n \in \N\}$ is dense in $L^0(\partial T, E)$ we can find a sequence in that set that convergences to $f$ in probability.
We can then find a subsequence of that sequence that convergences to $f$ pointwise almost everywhere.
By reindexing we can find a sequence $\{f_n\}_{n=1}^{\infty}$ in the set $\{h_n : n \in \N\}$ such that $f_n \to f$ pointwise almost everywhere and each $f_n$ is $\mathcal{M}_{\tau(n)}$-measurable for some $\tau(n) \in \N$.
By the same argument we can find a sequence $\{g_n\}_{n=1}^{\infty}$ in the set $\{h_n : n \in \N\}$ such that $g_n \to g$ pointwise almost everywhere and each $g_n$ is $\mathcal{M}_{\ell(n)}$-measurable for some $\ell(n) \in \N$.
Thus we can find a set $A \in \mathcal{M}$ with $\P(A) = 1$ such that $f_n(e) \to f(e)$ and $g_n(e) \to g(e)$ for all $e \in A$.
Since addition on $E$ is continuous we have that $f_n(e) + g_n(e) \to f(e) + g(e)$ for all $e \in A$.
By setting $r(n) = \max\{\tau(n), \ell(n)\}$ for all $n \in \N$ we have that $\mathcal{M}_{\tau(n)} \subseteq \mathcal{M}_{r(n)} $ and $\mathcal{M}_{\ell(n)} \subseteq \mathcal{M}_{r(n)}$ for all $n \in \N$.
Therefore, both $f_n$ and $g_n$ are $\mathcal{M}_{r(n)}$-measurable for all $n \in \N$ and so they are constant on all the sets $B_x$ for $x \in T_{r(n)}$.
This implies that the sum $f_n + g_n$ is constant on all the sets $B_x$ for $x \in T_{r(n)}$ and thus, the sum $f_n +g_n$ is $\mathcal{M}_{r(n)}$-measurable and therefore $\mathcal{M}$-measurable.
So, we have that $f + g$ is an almost everywhere pointwise limit of the sequence of $\mathcal{M}$-measurable functions $\{f_n + g_n\}_{n=1}^{\infty}$, the values of which are elements of the metric space $(E,d)$, and the measure space $(\partial T, \mathcal{M}, \P)$ is complete.
This implies that the function $f+g$ is $\mathcal{M}$-measurable.
The proof that $af$ is $\mathcal{M}$-measurable is similar, it requires the fact that multiplication on $E$ is continuous, and is omitted.

\item The proof of this claim is similar to the proof of the respective claim about real-valued functions, which is well known.
It suffices to replace the absolute value in $\R$ by a translation invariant metric $d$. Thus, the proof is omitted.

\item The proof of this claim is similar to the proof of the respective claim about real-valued functions, which is well known.
It suffices to replace the absolute value in $\R$ by a norm induced metric $d$.
Thus, the proof is omitted.
\end{enumerate}
\end{proof}
For every $f \in H(T,E)$ and $n \in \N$ we define a function $\omega_n(f): \partial T \to E$ by $\omega_n(f)(e) = f(z)$, where $z$ is the unique element of $e \cap T_n$ and $e \in \partial T$.
Notice that $\omega_n(f)$ is constant on each $B_x$ for $x \in T_n$ and thus it is $\mathcal{M}_n$-measurable.
\begin{definition}
A harmonic function $f \in H(T,E)$ is called universal if and only if the sequence $\{\omega_n(f)\}_{n=1}^{\infty}$ is dense in $L^0(\partial T, E)$.
The set of universal harmonic functions shall be denoted with $U(T,E)$.
\end{definition}
The following theorem was proved in \cite{BIEHLER2}.
\begin{theorem}
Let $E$ be a separable metrizable topological vector space over $\C$. The set $U(T,E) \cup \{0\}$ contains a vector space which is dense in $H(T,E)$, that is we have algebraic genericity for the set $U(T,E)$.
\end{theorem}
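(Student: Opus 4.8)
The plan is to exhibit a countable family $\{u_j\}_{j\geq 1}\subseteq H(T,E)$ whose linear span is dense in $H(T,E)$ and every nonzero finite linear combination of which lies in $U(T,E)$; then $F=\Span\{u_j\}$ is the required space. Throughout I use that each $\omega_n\colon H(T,E)\to L^0(\partial T,E)$ is linear, that $H(T,E)$ is a separable vector space, and that $L^0(\partial T,E)$ is separable with the dense sequence $\{h_m\}$ of $\mathcal{M}_{k(m)}$-measurable functions supplied above. I also fix a translation invariant metric $d$ on $E$ (Birkhoff--Kakutani), so that by \cref{prop:struc}(iii) addition on $L^0(\partial T,E)$ is continuous; together with the fact that multiplication by a fixed nonzero scalar is a self-homeomorphism of $L^0(\partial T,E)$ (it is continuous on $E$, and a continuous map preserves convergence in probability via the subsequence-almost-everywhere characterisation), this makes every finite linear combination map $(\zeta_j)_{j\in J}\mapsto\sum_{j\in J}a_j\zeta_j$ continuous from the product $\prod_{j\in J}L^0(\partial T,E)$ into $L^0(\partial T,E)$.

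The engine of the construction is the flexibility of harmonic extension that underlies the single-function theorem of \cite{BIEHLER2}: given a function that is harmonic on a finite initial part $T_0\cup\cdots\cup T_M$, any level $N>M$, any $\mathcal{M}_N$-measurable target $\psi$ and any $\e>0$, one can extend it to a function harmonic on all of $T$ with $P(\omega_N(f),\psi)<\e$. This is possible because at each vertex $x$ the relation $f(x)=\sum_{y\in S(x)}w(x,y)f(y)$ with $\abs{S(x)}\geq 2$ constrains the values on the children by a single linear equation, leaving them free in a subspace of positive dimension; propagating this freedom from level $M$ down to level $N$ lets one steer $\omega_N(f)$ arbitrarily close to $\psi$ in probability (note that $P$-closeness ignores means, so no martingale-type obstruction arises). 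Crucially, distinct functions $u_j$ are built independently, so this extension may be carried out simultaneously for finitely many of them with no interaction.

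The key idea is to aim not for individual universality but for joint universality: I construct the $u_j$ so that for every finite $J\subseteq\N$ the joint trace map $n\mapsto(\omega_n(u_j))_{j\in J}$ has dense range in $\prod_{j\in J}L^0(\partial T,E)$. Granting this, fix a nonzero finitely supported scalar vector $(a_j)_{j\in J}$, a target $h\in L^0(\partial T,E)$ and $\e>0$, choose $j_0\in J$ with $a_{j_0}\neq 0$, and set $\psi_{j_0}=a_{j_0}^{-1}h$ and $\psi_j=0$ for $j\neq j_0$, so that $\sum_{j\in J}a_j\psi_j=h$. By continuity of the linear combination map there is $\delta>0$ such that a joint trace within $\delta$ of $(\psi_j)_{j\in J}$ yields $\sum_{j\in J}a_j\omega_n(u_j)$ within $\e$ of $h$; joint density supplies such an $n$. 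Hence the traces of $\sum_{j\in J}a_j u_j$ are dense in $L^0(\partial T,E)$, i.e. the combination is universal. This is exactly the point at which arbitrary (not merely rational) coefficients are handled, and it is the reason for working in the product space: one cannot make the competing terms small, but one can place the joint trace near any prescribed tuple.

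To realise joint universality together with density of the span, I enumerate a countable list of tasks, each treated on its own block of levels, the blocks pairwise disjoint and processed from left to right. When $u_j$ is first introduced it is fixed on an initial segment $T_0\cup\cdots\cup T_{M_j}$ to match a member $g_j$ of a fixed dense sequence of $H(T,E)$ closely enough that $\rho(u_j,g_j)<1/j$; since later extensions only add values at deeper levels, this estimate persists, so $\{u_j\}$ is dense and $F=\Span\{u_j\}$ is dense in $H(T,E)$. An approximation task is indexed by a finite $J$ and a target tuple from a fixed dense sequence of $\prod_{j\in J}L^0(\partial T,E)$; at the task's level $n$, chosen large enough that the targets are $\mathcal{M}_n$-measurable, I apply the extension engine independently to each $u_j$ with $j\in J$ to force $(\omega_n(u_j))_{j\in J}$ within tolerance of the target tuple, extending every other active function harmonically in an arbitrary admissible way. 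Listing the approximation tasks so that every finite $J$ and a dense set of target tuples occur delivers the joint density used above. The main obstacle is precisely this simultaneous, conflict-free extension across infinitely many interleaved tasks: it must leave earlier blocks untouched, keep every $u_j$ defined on all of $T$ in the limit, and still hit each prescribed joint target, all of which rests on the downward harmonic freedom recorded in the engine. Routine bookkeeping then completes the construction.
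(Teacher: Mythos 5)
First, a point of reference: the paper does not prove this theorem itself but quotes it from the literature, so I am judging your construction on its own terms. Its overall architecture --- a countable family $\{u_j\}$ whose \emph{joint} trace map $n\mapsto(\omega_n(u_j))_{j\in J}$ is dense in the product, combined with continuity of the fixed-coefficient map $(\zeta_j)\mapsto\sum_j a_j\zeta_j$ on $L^0(\partial T,E)$ --- is sound, and it is the same product-space idea that powers the paper's \cref{lem:linearity,lem:density} for the class $X(T,E)$; the difference is that you build the family by a direct interleaved extension argument, whereas the paper starts from a single element of the residual class in $H(T,E^{\N})$ and perturbs its coordinates. Your version of the linear-combination step (continuity of the combination map at the tuple $(\psi_j)$, rather than the paper's scaled balls $\frac{1}{b_i}B(0,\delta)$) even works for a general separable metrizable $E$, which matches the generality of the statement.

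The genuine gap is in your ``extension engine.'' As stated --- given $f$ harmonic up to level $M$, \emph{any} $N>M$, any $\mathcal{M}_N$-measurable $\psi$ and any $\e>0$, one can extend harmonically with $P(\omega_N(f),\psi)<\e$ --- it is false: take the dyadic tree with $q\equiv w\equiv\frac12$, $E=\C$, $f\equiv 0$ on $T_0\cup\dots\cup T_M$, $N=M+1$ and $\psi\equiv 1$. Harmonicity forces $f(y_1)+f(y_2)=0$ at the two children of each $x\in T_M$, so at least one child satisfies $\abs{f(y_i)-1}\geq 1$; these bad sectors have total measure at least $\frac12$, whence $P(\omega_N(f),\psi)\geq\frac14$ for \emph{every} harmonic extension. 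The correct engine lets you choose the approximation level: extend arbitrarily down to level $N$ (e.g.\ constantly along children, which is harmonic since $\sum_{y\in S(x)}w(x,y)=1$), then for $L$ further levels give every child the target value $\psi(B_z)$ except one sacrificial child of minimal $q$-weight, which absorbs the linear constraint; the set where $\omega_{N+L}(f)\neq\psi$ then has measure at most $2^{-L}$, and since the integrand of $P$ is bounded by $1$ this gives $P(\omega_{N+L}(f),\psi)<\e$ once $2^{-L}<\e$. Consequently your task levels must be chosen not merely ``large enough that the targets are $\mathcal{M}_n$-measurable'' but deep enough below the previously frozen levels for the tolerance, and each target tuple must be revisited with tolerances tending to zero so that joint density actually follows. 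With those corrections the blocks-and-tasks bookkeeping closes, and the remaining steps --- persistence of $\rho(u_j,g_j)<1/j$ and hence density of the span, and universality of nonzero combinations --- are correct.
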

We will now consider some classes of universal functions.
\begin{definition}
A harmonic function $f \in H(T,E)$ is called frequently universal if and only if for every nonempty open set $V \subseteq L^0(\partial T, E)$ the set $\{ n\in \N: \omega_n(f) \in V\}$ has strictly positive lower density. The set of frequently universal functions shall be denoted by $U_{FM}(T,E)$.
\end{definition}

\begin{definition}
A harmonic function $f \in H(T,E)$ is said to belong to the class $X(T,E)$ if and only if for every nonempty open set $V \subseteq L^0(\partial T,E)$ the set $\{ n\in \N: \omega_n(f) \in V\}$ has upper density equal to one.
\end{definition}

The following theorem was proved in \cite{BIEHLER2}.

\begin{theorem}
\label{thm:Xdens}
Let $E$ be a separable metrizable topological vector space over $\C$. The class $X(T,E)$ is $G_{\delta}$ and dense in $H(T,E)$.
\end{theorem}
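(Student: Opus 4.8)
The plan is to treat the two assertions separately, the $G_\delta$ property being essentially a soft topological unwinding and the density being the substantive point requiring an explicit construction on $T$.

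For the $G_\delta$ part, I would first record that each evaluation map $\omega_n\colon H(T,E)\to L^0(\partial T,E)$ is continuous: convergence in $H(T,E)$ is pointwise convergence on $T$, and $\omega_n(f)$ is the simple function taking the value $f(x)$ on $B_x$ for $x\in T_n$, so
$$P(\omega_n(f),\omega_n(g))=\sum_{x\in T_n}\P(B_x)\,\frac{d(f(x),g(x))}{1+d(f(x),g(x))}$$
depends continuously on the finitely many values $f(x)$, $x\in T_n$. Hence for any open ball $B\subseteq L^0(\partial T,E)$ the map $f\mapsto\mathbf 1_{B}(\omega_n(f))$ is the indicator of the open set $\omega_n^{-1}(B)$, so it and every finite sum $\sum_{n=1}^{N}\mathbf 1_{B}(\omega_n(f))$ are lower semicontinuous. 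Writing $B_{m,j}=B(h_m,1/j)$ for the dense sequence $\{h_m\}$ of the excerpt (these balls form a base of $L^0(\partial T,E)$), monotonicity of upper density shows that $f\in X(T,E)$ iff $\overline d(\{n:\omega_n(f)\in B_{m,j}\})=1$ for all $m,j$, and unwinding ``upper density one'' gives the countable formula
$$X(T,E)=\bigcap_{m,j,k,N_0}\ \bigcup_{N\geq N_0}\Big\{f\in H(T,E):\textstyle\sum_{n=1}^{N}\mathbf 1_{B_{m,j}}(\omega_n(f))>(1-\tfrac1k)N\Big\}.$$
Each set in braces is a superlevel set of a lower semicontinuous function, hence open; the inner unions are open and the outer countable intersection is therefore $G_\delta$.

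For density I would argue by a direct construction exploiting the branching of $T$ (recall $\abs{S(x)}\geq 2$ and $w(x,y)\neq 0$). Given $f_0\in H(T,E)$ and $\delta>0$, choose $L$ so large that any $f$ agreeing with $f_0$ on $T_0\cup\cdots\cup T_L$ has $\rho(f,f_0)<\delta$; I set $f=f_0$ there and define $f$ below level $L$ by prescribing, at each vertex, children whose $w$-weighted average equals the value already fixed at the parent, so that $f$ is harmonic by construction. I then fix an enumeration $(s_r,j_r)_{r\in\N}$ of $\N\times\N$ in which every pair recurs infinitely often and partition the levels below $L$ into consecutive blocks $(b_{r-1},b_r]$ (with $b_0=L$) whose lengths grow so fast that $n^\ast_r/b_r\to 0$, where $n^\ast_r$ is the transition length described next. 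On block $r$ the goal is $\omega_n(f)\approx h_{s_r}$, which is $\mathcal M_{k(s_r)}$-measurable, i.e.\ constant $=c_x$ on each $B_x$, $x\in T_{k(s_r)}$. The mechanism is the ``thin spine'' device: on each sector $B_x$ I set $f\equiv c_x$ off a single descending sub-sector $S_x$, and on $S_x$ I place the unique value restoring the martingale average $f(x)=v_x$; since $\P(S_x)=\prod q\to 0$ with depth, this value may be large but lives on a set of vanishing measure, so $P(\omega_n(f),h_{s_r})\leq\P(\bigcup_x S_x)\to 0$ as $n$ runs through the block. Thus there is a level $n^\ast_r$ after which every $n\in(n^\ast_r,b_r]$ is good for $(s_r,j_r)$, whence the frequency $c_N=\tfrac1N\,\#\{n\le N:\omega_n(f)\in B(h_s,1/j)\}$ satisfies $c_{b_r}\geq 1-n^\ast_r/b_r$. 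Evaluating along the blocks assigned to a fixed pair $(s,j)$ gives $\limsup_N c_N=1$, i.e.\ upper density one for $B(h_s,1/j)$; by the base reduction above $f\in X(T,E)$, and since $\rho(f,f_0)<\delta$ density follows.

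I expect the genuinely delicate step to be the density construction, specifically the block transitions: switching from the configuration of block $r-1$ to the target $h_{s_r}$ must respect the averages forced downward from the fixed upper part of the tree, and the resulting discrepancy must be routed into thin sectors whose total measure — and hence contribution to $P$ — is controlled uniformly, so that $n^\ast_r$ stays negligible compared with $b_r$. Everything else (harmonicity, the estimate $\rho(f,f_0)<\delta$, and the observation that the construction uses only the $\C$-vector space operations of $E$ together with the bound $\frac{d}{1+d}\leq 1$, so that it applies to an arbitrary separable metrizable topological vector space) is routine bookkeeping.
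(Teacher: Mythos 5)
The paper does not actually prove this theorem: it is quoted with the attribution ``proved in \cite{BIEHLER2}'', so there is no in-paper argument to compare against. Judged on its own terms, your two-part argument is essentially sound and is in the spirit of the constructions used in that literature. The $G_\delta$ half is complete: $\omega_n$ depends continuously on the finitely many values on $T_n$, the reduction to the countable base $B(h_m,1/j)$ via monotonicity of upper density is correct, and your countable intersection of unions of superlevel sets of lower semicontinuous counting functions is exactly the right formula. The density half also has the right architecture (freeze $f_0$ down to level $L$, then approximate each $h_{s_r}$ on long blocks, routing the harmonicity constraint into thin subsectors), and you correctly isolate the block-transition bookkeeping as the delicate point.

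One step is wrong as literally stated and needs a line of repair: the claim that $\P(S_x)=\prod q\to 0$ with depth is false for an \emph{arbitrary} descending chain of subsectors, since the transition probabilities $q$ may tend to $1$ along a branch and the infinite product can converge to a positive number. You must \emph{choose} the spine: because $\abs{S(x)}\geq 2$ and $\sum_{y\in S(x)}q(x,y)=1$, every vertex has a child with $q(x,y)\leq 1/2$, and descending through such children gives $\P(S_x)\leq 2^{-m}\P(B_x)$ after $m$ levels, hence total spine measure at depth $m$ into a block at most $2^{-m}$; this is what makes $n^{\ast}_r-b_{r-1}$ depend only on $j_r$ and keeps $n^{\ast}_r/b_r$ controllable. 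Note also that the spine choice is governed by the probabilities $q$ while the corrective value on the spine is solved from the harmonicity weights $w(x,y)\in\C\setminus\{0\}$; these are independent pieces of data in the generalized setting, and your construction should say explicitly that the child carrying the correction is selected for small $q$, with the correction value $\bigl(f(x)-(1-w(x,y_0))c_x\bigr)/w(x,y_0)$ legitimate because $w(x,y_0)\neq 0$. With that fix the argument goes through for a general separable metrizable topological vector space $E$, as required.
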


Notice that $f \in X(T,E)$ if and only if for every nonempty non-dense open set $V \subseteq L^0(\partial T, E)$ the set $\{ n\in \N: \omega_n(f) \in V\}$ has lower density equal to zero.
From this observation and the existence of a nonempty non-dense open set $V \subseteq L^0(\partial T, E)$ we can deduce the following proposition, which was proved in a different way in \cite{BIEHLER1}.

\begin{proposition}
\label{prop:disjoint}
Let $E$ be a separable metrizable topological vector space over $\C$.
The sets $U_{FM}(T,E)$ and $X(T,E)$ are disjoint, that is $$U_{FM}(T,E) \cap X(T,E) = \varnothing.$$
\end{proposition}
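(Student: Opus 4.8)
The plan is to derive a contradiction from the assumption that some $f$ lies in both classes, playing the defining density condition for $U_{FM}(T,E)$ against the reformulated condition for $X(T,E)$ recorded in the sentence immediately preceding the proposition. The only ingredient requiring a separate argument is the existence of a single nonempty non-dense open set $V \subseteq L^0(\partial T, E)$; once such a $V$ is fixed, the disjointness is immediate from the two definitions.

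First I would produce the set $V$. By part (i) of \cref{prop:struc} the space $L^0(\partial T, E)$ has no isolated points, so it contains at least two distinct points $\psi \neq \phi$. Set $\delta = P(\psi,\phi) > 0$ and take $V = \{\chi \in L^0(\partial T, E) : P(\psi,\chi) < \delta/2\}$. This $V$ is open and nonempty, since it contains $\psi$. It is also non-dense: its closure is contained in the closed ball $\{\chi : P(\psi,\chi) \leq \delta/2\}$, which does not contain $\phi$ because $P(\psi,\phi) = \delta > \delta/2$; hence $\phi \notin \overline{V}$ and $V$ is not dense.

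Next, suppose toward a contradiction that $f \in U_{FM}(T,E) \cap X(T,E)$. Applying the definition of $U_{FM}(T,E)$ to the nonempty open set $V$ shows that $\{n \in \N : \omega_n(f) \in V\}$ has strictly positive lower density. On the other hand, since $V$ is a nonempty non-dense open set and $f \in X(T,E)$, the reformulation of membership in $X(T,E)$ noted just before the proposition shows that the very same set $\{n \in \N : \omega_n(f) \in V\}$ has lower density equal to zero. A set cannot have lower density both strictly positive and equal to zero, so no such $f$ exists and $U_{FM}(T,E) \cap X(T,E) = \varnothing$.

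The argument is short, and the one point demanding care is the construction of $V$: one must ensure the chosen open set is genuinely non-dense rather than merely proper, which is exactly what the closed-ball containment above provides, using that the metric $P$ is bounded. The no-isolated-points property from \cref{prop:struc} supplies the two distinct points needed to separate. I do not anticipate any deeper obstacle, since once $V$ is fixed the incompatibility of ``strictly positive lower density'' and ``lower density zero'' for one and the same index set is the entire content of the claim.
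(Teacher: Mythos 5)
Your proposal is correct and follows essentially the same route as the paper, which deduces the proposition from the reformulation of membership in $X(T,E)$ (lower density zero on every nonempty non-dense open set) together with the existence of one such set $V$. Your explicit construction of $V$ as a small ball around one of two distinct points supplied by \cref{prop:struc}(i) simply fills in the detail the paper leaves implicit.
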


\section{Algebraic genericity}

In this section we will assume that the metric $d$ of $E$ is induced by a norm as we want $L^0(\partial T, E)$ to be a topological vector space when considered with the topology of convergence in probability.
Under this assumption we can prove the following theorem, the proof of which can be found in \cite{BIEHLER1, BIEHLER2}.

\begin{theorem}
\label{thm:UFMalg}
Let $E$ be normed space over $\C$.
The set $U_{FM}(T,E)\cup \{0\}$ contains a vector space which is dense in $H(T,E)$.
\end{theorem}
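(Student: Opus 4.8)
The plan is to argue constructively. Since frequent universality, unlike membership in $X(T,E)$, is not a $G_\delta$ property, the Baire category argument behind \cref{thm:Xdens} does not apply. The goal is therefore to produce a sequence $(f_k)_{k \in \N}$ in $H(T,E)$ whose closed linear span equals $H(T,E)$ and such that every nonzero finite linear combination $\sum_k a_k f_k$ belongs to $U_{FM}(T,E)$. Then $F = \Span\{f_k : k \in \N\}$ is a dense subspace of $H(T,E)$ with $F \setminus \{0\} \subseteq U_{FM}(T,E)$, which is exactly the assertion. Throughout I would fix a countable dense sequence $(g_k)_{k \in \N}$ in $H(T,E)$, to be approximated for density of the span, and recall the dense sequence $(h_n)_{n \in \N}$ of $L^0(\partial T, E)$ from the excerpt, each $h_n$ being $\mathcal{M}_{k(n)}$-measurable, which supplies the approximation targets for universality.

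The engine of the construction is an interpolation step that uses the hypothesis that every $S(x)$ has at least two elements. Given a level $N$, any prescribed restriction of a harmonic function to $T_0 \cup \dots \cup T_N$, any $\mathcal{M}_m$-measurable target $\phi$, and any tolerance $\e > 0$, I would extend the function harmonically to some later level $N' > N$ so that $P(\omega_{N'}(f), \phi) < \e$. This is possible because the constraint $f(x) = \sum_{y \in S(x)} w(x,y) f(y)$ determines a vertex value from those of its children but, when $S(x)$ has at least two elements, leaves a free affine direction at each vertex; hence the values on a sufficiently deep level $T_{N'}$ can be chosen to realize an essentially arbitrary block-constant profile while remaining consistent with the already-fixed ancestors. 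The same freedom lets me drive a function's values close to $0$ on a prescribed level. This building block underlies the existence results in \cite{ABAKUMOV1, BIEHLER1, BIEHLER2}, and I would isolate it as a lemma.

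To obtain frequent universality of all linear combinations simultaneously I would enumerate a countable family of demands, each a triple $(j, K, \vec a)$ consisting of a target index $j$, a carrier index $K$, and a finite tuple $\vec a = (a_1, \dots, a_M)$ of Gaussian-rational coefficients with $a_K \neq 0$. To each demand I would assign a set $D_{(j,K,\vec a)} \subseteq \N$ of positive lower density, with the sets pairwise disjoint; this is possible since lower density is superadditive on disjoint sets, so countably many disjoint sets of positive lower density exist provided their densities tend to $0$ (for instance comparable to $2^{-i}$ along an enumeration). On each level $n$ of $D_{(j,K,\vec a)}$ the interpolation step is used to force $\omega_n(f_K)$ within $\e_n$ of $a_K^{-1} h_j$ and $\omega_n(f_{k'})$ within $\e_n$ of $0$ for the other relevant indices $k' \le M$, with $\e_n \to 0$. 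Then for the combination with these very coefficients, $\omega_n\!\big(\sum_{k} a_k f_k\big) = \sum_k a_k \omega_n(f_k)$ is close to $h_j$ on $D_{(j,K,\vec a)}$, a set of positive lower density; since $(h_j)_j$ is dense, this combination is frequently universal. A nonzero combination with arbitrary complex coefficients is handled by approximating its coefficients by Gaussian rationals and invoking the continuity of addition and scalar multiplication on $L^0(\partial T, E)$, which hold here by \cref{prop:struc}(iii),(iv) precisely because $E$ is normed.

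The density of $\Span\{f_k\}$ is arranged in the same induction by reserving, for each $k$, the lowest free levels of $f_k$ to match $g_k$, so that $\rho(f_k, g_k)$ is small while the deep levels are left for the universality demands; the two requirements do not conflict because $\rho$ is governed by finitely many low levels whereas the demands act on deep levels. The main obstacle is the bookkeeping that keeps all of this consistent at once: each $f_k$ is a single harmonic function, so its values at the various demanded levels (a target when $k$ is the carrier, near-zero otherwise) must be realized along one coherent harmonic extension, and the tolerances $\e_n$ must be chosen small enough, relative to the coefficients appearing in the finitely many demands active below level $n$, that the accumulated errors in $\sum_k a_k \omega_n(f_k)$ stay controlled. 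Managing this interaction between the linear-combination errors and the frequency sets of positive lower density, rather than any single interpolation, is where the real work lies; once it is organized, the conclusion follows as above.
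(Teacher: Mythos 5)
First, a point of reference: the paper does not prove \cref{thm:UFMalg} at all --- it is quoted as known, with the proof deferred to \cite{BIEHLER1,BIEHLER2} --- so there is no internal argument to measure your proposal against. What you outline is the standard constructive route from those references: a sequence $(f_k)$ approximating a dense sequence of $H(T,E)$ on low levels, with countably many ``demands'' distributed over disjoint sets of positive lower density on deep levels, the carrier coordinate steered toward $a_K^{-1}h_j$ and the remaining relevant coordinates toward $0$. This is genuinely different from (and necessarily harder than) the route the paper takes for the companion class $X(T,E)$, where the existence of an element of $X(T,E^{\N})$ comes free from Baire category via \cref{thm:Xdens} and only the linear-algebra step (\cref{lem:linearity}) needs work; as you correctly note, that shortcut is unavailable for $U_{FM}$ because it is not residual.

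There is, however, one gap that is more than bookkeeping. The interpolation step cannot reach an arbitrary tolerance in a single level: at each vertex $x$ the harmonicity constraint determines one child's value from the others, and the sector of that child may carry up to half of the conditional measure, so one extension level only guarantees $P(\omega_{N+1}(f),\phi)\leq 1/2$; tolerance $\e$ costs on the order of $\log_2(1/\e)$ consecutive unconstrained levels. Hence the demand sets cannot be arbitrary pairwise disjoint sets of positive lower density (your ``densities comparable to $2^{-i}$'' remark): they must in addition satisfy a separation condition --- each demanded level is preceded either by a level of the \emph{same} demand (no steering needed, since the target is unchanged) or by a steering gap whose length grows with the required accuracy --- while each set still retains positive lower density. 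This is precisely the Bayart--Grivaux-type partition lemma from the frequent hypercyclicity literature, and without it the construction breaks at adjacent levels belonging to different demands. A second, smaller issue: passing from Gaussian-rational to arbitrary complex coefficients does not follow from continuity of scalar multiplication on $L^0(\partial T,E)$ alone, because that continuity is not uniform in the function argument ($P(c\phi,0)$ need not be small for small $c$ if $\phi$ is large on a set of fixed measure, and $\omega_n(f_k)$ is unbounded over $n$); the step works only because on the relevant demand set the coordinates $\omega_n(f_k)$ are pinned near the fixed functions $a_K^{-1}h_j$ and $0$, so the discrepancy $\sum_k(a_k-b_k)\omega_n(f_k)$ can be estimated against those fixed targets. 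With the separation lemma supplied and the approximation step argued this way, your outline closes into a proof.
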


We will now proceed to proving the same result for the class $X(T,E)$.
For this we will consider generalized harmonic functions $f:T \to E^{\N}$, where $E^{\N}$ is a separable metrizable topological vector space when considered with its product metric $\hat{d}$ which is translation invariant.
Notice that a function $f = (f_1,f_2,\dots)$ belongs to set $H(T,E^\N)$ if and only if the function $f_n$ belongs to the set $H(T,E)$ for all $n \in \N$.
By \cref{prop:struc} we know that $L^0(\partial T, E)$ is a topological vector space and $L^0(\partial T, E^{\N})$ is a vector space in which addition is continuous, each considered with the topology of convergence in probability.
We will now prove two lemmas that will lead to the result.

\begin{lemma}
\label{lem:linearity}
Let $E$ be normed space over $\C$.
If $f = (f_1,f_2,\dots) \in X(T,E^{\N})$ then the vector space $\Span\{f_n : n\in \N\}$ is contained in the set $X(T,E) \cup \{0\}$.
\end{lemma}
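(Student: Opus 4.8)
The plan is to transfer the $X$-property from $f$ to each finite linear combination by realizing the boundary evaluations of the combination as the image of those of $f$ under a single fixed continuous linear map. A typical element of $\Span\{f_n : n \in \N\}$ has the form $g = \sum_{j=1}^{k} c_j f_{i_j}$ with $c_j \in \C$. First I would dispose of the trivial case: if $g = 0$ we are in the $\{0\}$ part of $X(T,E)\cup\{0\}$, so assume $g \neq 0$. Since each $f_{i_j} \in H(T,E)$ and the harmonic equation is linear in $f$, we have $g \in H(T,E)$, and at least one coefficient, say $c_1$, is nonzero.

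Next I would introduce the map $\Phi \colon L^0(\partial T, E^{\N}) \to L^0(\partial T, E)$ given by $\Phi\bigl((\psi_m)_{m \in \N}\bigr) = \sum_{j=1}^{k} c_j \psi_{i_j}$, that is $\Phi = \sum_{j=1}^{k} c_j \pi_{i_j}$, where $\pi_m$ denotes the $m$-th coordinate projection. The decisive observation is the intertwining identity $\Phi(\omega_n(f)) = \omega_n(g)$ for every $n \in \N$: indeed $\omega_n(f) = (\omega_n(f_1), \omega_n(f_2), \dots)$ and $\omega_n$ is linear, so $\Phi(\omega_n(f)) = \sum_{j=1}^{k} c_j \omega_n(f_{i_j}) = \omega_n\bigl(\sum_{j=1}^{k} c_j f_{i_j}\bigr) = \omega_n(g)$.

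Then I would verify the two properties of $\Phi$ that drive the transfer. For continuity, each $\pi_m$ is continuous for convergence in probability because the product metric $\hat{d}$ dominates a fixed positive multiple of the truncated coordinate metric, so convergence in probability in $L^0(\partial T, E^{\N})$ forces coordinatewise convergence in probability; and $\Phi$, being a finite $\C$-linear combination of the continuous maps $\pi_{i_j}$, is then continuous precisely because addition and scalar multiplication are continuous on $L^0(\partial T, E)$, which is where the hypothesis that $E$ is normed enters through \cref{prop:struc}(iii)--(iv). For surjectivity, given any target $\chi \in L^0(\partial T, E)$, the tuple whose $i_1$-th coordinate is $\chi / c_1$ and whose other coordinates are $0$ is mapped by $\Phi$ to $\chi$, so $\Phi$ is onto.

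Finally, to conclude $g \in X(T,E)$, let $V \subseteq L^0(\partial T, E)$ be nonempty and open. Then $\Phi^{-1}(V)$ is open by continuity of $\Phi$ and nonempty by surjectivity of $\Phi$, while the intertwining identity yields $\{n \in \N : \omega_n(g) \in V\} = \{n \in \N : \omega_n(f) \in \Phi^{-1}(V)\}$. Since $f \in X(T, E^{\N})$ and $\Phi^{-1}(V)$ is a nonempty open subset of $L^0(\partial T, E^{\N})$, the latter set has upper density equal to one, hence so does the former. As $V$ was an arbitrary nonempty open set, $g \in X(T,E)$, which proves $\Span\{f_n : n \in \N\} \subseteq X(T,E) \cup \{0\}$. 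The main obstacle I anticipate is the careful justification of the continuity of $\Phi$, specifically continuity of the coordinate projections in the convergence-in-probability topology on the product space together with the invocation of the topological vector space structure of $L^0(\partial T, E)$; the surjectivity observation, though elementary, is the crucial point that keeps $\Phi^{-1}(V)$ nonempty for every nonempty open $V$.
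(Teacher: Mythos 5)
Your proof is correct, and it reaches the same reduction as the paper --- producing a nonempty open set $\widehat{V} \subseteq L^0(\partial T, E^{\N})$ with $\{n \in \N : \omega_n(f) \in \widehat{V}\} \subseteq \{n \in \N : \omega_n(g) \in V\}$ --- but by a genuinely different mechanism. The paper builds $\widehat{V}$ by hand as a product $\frac{1}{b_1}B(0,\delta) \times \cdots \times \frac{1}{b_s}B(h,\delta) \times L^0(\partial T, E) \times \cdots$ and runs an explicit $\e/s$ triangle-inequality estimate using the translation invariance of $P$; you instead take $\widehat{V} = \Phi^{-1}(V)$ for the continuous linear surjection $\Phi = \sum_j c_j \pi_{i_j}$, which intertwines the boundary evaluations via $\Phi(\omega_n(f)) = \omega_n(g)$. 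Your version buys a cleaner conclusion (an equality of the two index sets rather than an inclusion) and isolates the only two facts that matter: continuity of $\Phi$, which you correctly source from the topological vector space structure of $L^0(\partial T, E)$ in \cref{prop:struc}, and surjectivity, which plays exactly the role of the paper's observation that its product set is nonempty. The paper's version is more self-contained in that it needs only translation invariance of $P$ and openness of the scaled balls, and it makes the quantitative dependence on $s$ and $\e$ visible (incidentally, its choice $\delta = \frac{1}{s\cdot\e}$ is evidently a slip for $\delta = \e/s$, which is what the final step $s\cdot\delta = \e$ requires). Both arguments ultimately rest on the same fact, stated in the paper, that the convergence-in-probability topology on $L^0(\partial T, E^{\N})$ coincides with the product topology: you use it for continuity of the coordinate projections, the paper uses it for openness of $\widehat{V}$.
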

\begin{proof}
Let $a_1f_1 + \dots + a_sf_s$ be an arbitrary element of the set $\Span\{f_n : n\in \N\}$ with $a_s \neq 0$.
Let $V$ be a nonempty open subset of $L^0(\partial T, E)$.
There exist some $\e > 0$ and $h \in V$ such that $B(h,\e) \subseteq V$.
For $i \in \{1,\dots, s\}$ we define
$$
b_i =
\begin{cases}
a_i, &\text{if } a_i \neq 0 \\
1, &\text{if } a_i = 0
\end{cases}.
$$
We set $\delta = \dis{\frac{1}{s\cdot \e}}$ and consider the set
$$
\widehat{V} = \frac{1}{b_1}B(0,\delta) \times \frac{1}{b_2}B(0,\delta) \times \cdots \times \frac{1}{b_s}B(h,\delta) \times L^0(\partial T, E) \times \cdots.
$$
We now fix some $n \in \N$ and suppose that $\omega_n(f) \in \widehat{V}$.
Then for every $i \in \{1,\dots,s-1\}$ we have
$$\omega_n(f_i) \in \frac{1}{b_i}B(0,\delta) \text{~~~and~~~} \omega_n(f_s) \in \frac{1}{b_s}B(h,\delta).$$
By the definition of $b_1,\dots,b_n$ and the linearity of $\omega_n(\cdot)$ we conclude that for all $i \in \{1,\dots,s-1\}$ we have
$$P(\omega_n(a_if_i), 0) < \delta \text{~~~and~~~} P(\omega_n(a_sf_s), h) < \delta.$$
The metric $P$ is translation invariant, since the metric $d$ is translation invariant, thus
\begin{align*}
P(\omega_n(a_1f_1 + \dots + a_sf_s), h) &\leq P(\omega_n(a_1f_1), 0) + \dots + P(\omega_n(a_{s-1}f_{s-1}),0) + P(\omega_n(a_sf_s),h) \\
&< s\cdot \delta.
\end{align*}
Since $s\cdot \delta = \e$ we conclude that $\omega_n(a_1f_1+\dots+a_sf_s) \in B(h,\e)$, where $B(h, \e) \subseteq V$.
Therefore, we have proved that
$$\{n \in \N: \omega_n(f) \in \widehat{V} \} \subseteq \{n \in \N: \omega_n(a_1f_1 + \dots + a_sf_s) \in V\}.$$
Notice that convergence in probability in $L^0(\partial T,E^\N)$, which is the same set as $L^0(\partial T,E)^\N$, is equivalent to convergence in the product topology when considering $L^0(\partial T,E)$ with the topology of convergence in probability.
Therefore $\widehat{V}$ is an open subset of $L^0(\partial T,E^\N)$ and it is also nonempty.
Since $f \in X(T,E)$, the set $\{n \in \N: \omega_n(f) \in \widehat{V}\}$ has upper density equal to one, which implies that the set $\{n \in \N: \omega_n(a_1f_1 + \dots + a_sf_s) \in V\}$ has upper density equal to one.
Thus $a_1f_1 + \dots a_sf_s \in X(T,E)$.
\end{proof}

\begin{lemma}
\label{lem:density}
Let $E$ be normed space over $\C$.
There exists some sequence $\{f_n\}_{n=1}^{\infty}$ dense in $H(T,E)$ such that the function $f = (f_1,f_2,\dots) $ belongs to the class $X(T,E^\N)$.
\end{lemma}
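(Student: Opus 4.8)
The plan is to realise \cref{lem:density} as a Baire-category intersection inside the product space $H(T,E^\N)$. First I would record the identification $H(T,E^\N)=H(T,E)^\N$: a function $f=(f_1,f_2,\dots)$ lies in $H(T,E^\N)$ exactly when each $f_n\in H(T,E)$, and, since convergence in the product metric $\hat d$ of $E^\N$ is coordinatewise convergence in $E$, this identification is a homeomorphism onto $H(T,E)^\N$ equipped with the product topology. Under this identification the assertion to be proved becomes: the set of sequences $(f_n)_n\in H(T,E)^\N$ that simultaneously (a) have dense range in $H(T,E)$ and (b) form a point of $X(T,E^\N)$ is nonempty. I would obtain such a point by writing each of the two conditions as a dense $G_\delta$ subset and intersecting them.

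For condition (b) I would apply \cref{thm:Xdens} with $E$ replaced by the separable metrizable topological vector space $E^\N$, which gives that $X(T,E^\N)$ is a dense $G_\delta$ subset of $H(T,E^\N)$; writing $X(T,E^\N)=\bigcap_j W_j$ with $W_j$ open, each $W_j\supseteq X(T,E^\N)$ is then dense. For condition (a) I would use that $H(T,E)$ is separable (a metric subspace of the separable space $(E^T,\rho)$) to fix a countable dense set $\{g_k:k\in\N\}$; then the set
$$A=\{(f_n)_n\in H(T,E)^\N : \{f_n:n\in\N\}\text{ is dense in } H(T,E)\}$$
equals $\bigcap_{k,i}D_{k,i}$, where $D_{k,i}=\{(f_n)_n : f_n\in B(g_k,1/i)\text{ for some }n\}$. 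Each $D_{k,i}=\bigcup_n \pi_n^{-1}(B(g_k,1/i))$ is open, being a union of preimages of balls under the coordinate projections $\pi_n$, and it is dense, since any basic open box constrains only finitely many coordinates and a free coordinate may be set equal to $g_k$; hence $A$ is a dense $G_\delta$.

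The conclusion would then follow by intersecting the two dense $G_\delta$ sets $A$ and $X(T,E^\N)$: any point $(f_1,f_2,\dots)$ of $A\cap X(T,E^\N)$ has dense range, so $\{f_n\}$ is the required dense sequence in $H(T,E)$, while $f=(f_1,f_2,\dots)\in X(T,E^\N)$ as wanted. To guarantee that this intersection is nonempty I would invoke the Baire category theorem in $H(T,E^\N)$, for instance through the usual nested-balls construction: enumerate the countably many dense open sets $W_j$ and $D_{k,i}$ whose intersection is $A\cap X(T,E^\N)$, and build a decreasing sequence of closed balls of radii tending to zero, each contained in the next dense open set, whose common point then lies in all of them.

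The main obstacle is precisely this last step: the nested-balls argument needs $H(T,E^\N)$ to be a Baire space, i.e. completely metrizable. Through the identification $H(T,E^\N)\cong H(T,E)^\N$ this reduces to complete metrizability of $H(T,E)$, which holds whenever $E$ is a Banach space, for then $E^T$ is a Fréchet space, $H(T,E)$ is a closed linear subspace, and countable products preserve complete metrizability. For a normed space $E$ that is not complete this can genuinely fail, and in that case the plan must be replaced by a direct construction: I would run the explicit universality construction underlying the density half of \cref{thm:Xdens}, interleaving at the successive finite stages the additional requirements that, for each $k$ and $i$, some coordinate $f_n$ lie in $B(g_k,1/i)$, so that membership in $X(T,E^\N)$ and density of $\{f_n\}$ are secured coordinate by coordinate. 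Checking that these two families of requirements can be scheduled on disjoint blocks of stages without interfering is the delicate point.
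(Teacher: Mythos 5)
Your first route has a genuine gap at exactly the point you flag: the Baire category theorem is not available here. The lemma is stated for an arbitrary normed space $E$, and for such $E$ the space $H(T,E^\N)$ need not be a Baire space. Concretely, take $E=c_{00}$, the finitely supported sequences with the sup norm, and write $E=\bigcup_n F_n$ with $F_n$ the (closed, proper) subspace of sequences supported on $\{1,\dots,n\}$. Evaluation at the root, $f\mapsto f(x_0)$, is a continuous linear surjection of $H(T,E)$ onto $E$ (constants are harmonic since the weights sum to $1$), so $H(T,E)=\bigcup_n \pi^{-1}(F_n)$ is a countable union of proper closed linear subspaces, each of which is nowhere dense; hence $H(T,E)$, and likewise $H(T,E^\N)$, is meager in itself. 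So the intersection of your two dense $G_\delta$ sets cannot be shown nonempty by a nested-balls argument, and your fallback --- interleaving the density requirements into a direct construction --- is precisely the part you leave unverified, so the proof is not complete as it stands. (Your analysis of the set $A$ of sequences with dense range as a dense $G_\delta$ in the product is correct, and the whole argument does go through verbatim when $E$ is Banach.)

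The paper avoids Baire category entirely, and the mechanism is worth internalizing because it is the actual content of the lemma. It uses \cref{thm:Xdens} only to produce a \emph{single} element $f=(f_1,f_2,\dots)$ of $X(T,E^\N)$ (density of $X(T,E^\N)$ gives nonemptiness; no intersection of $G_\delta$'s is needed). It then fixes a dense sequence $\{\varphi_n\}$ in $H(T,E)$ and corrects each coordinate: for each $n$ it builds a harmonic $g_n$ that agrees with $\varphi_n-f_n$ on $\bigcup_{k\leq N(n)}T_k$ and is then propagated by $g_n(x)=g_n(x^-)$, so that $\rho(f_n+g_n,\varphi_n)<1/n$ while $\omega_k(g_n)=\omega_{N(n)}(g_n)$ for all $k>N(n)$. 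Density of $\{f_n+g_n\}$ is then immediate, and membership of $f+g$ in $X(T,E^\N)$ follows because a basic open set $V$ constrains only finitely many coordinates, on which the perturbation $\omega_n(g)$ is eventually the \emph{constant} $\omega_L(g)$: one simply translates $V$ by $-\omega_L(g)$ and uses that $f\in X(T,E^\N)$. This ``eventually constant perturbation'' trick is the missing idea that replaces your Baire intersection and makes the statement true for every normed $E$.
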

\begin{proof}
The metric space $H(T,E)$ is separable, so let $\{\varphi_n: n \in \N\}$ be a dense sequence in $H(T,E)$.
Since $E^\N$ when considered with the product metric $\hat{d}$ is a separable topological vector space, \cref{thm:Xdens} applies.
Thus, the class $X(T,E^\N)$ is dense in $H(T,E^\N)$ and in particular nonempty, so let $f = (f_1,f_2,\dots)$ be an element of $X(T,E^\N)$.
We now fix some $n \in \N$.
There exists some $j_0(n)\in \N$ such that
$$\sum_{j= j_0(n)}^{\infty}{\frac{1}{2^j}} < \frac{1}{n}.$$
There also exists some $N(n) \in \N$ such that
$$\{z_0,\dots,z_{j_0(n)} \} \subseteq \bigcup_{k=0}^{N(n)}{T_k},$$
where $\{z_n:n\in \N\}$ is the denumeration of $T$ through which the metric $\rho$ was defined.
We set $g_n(x) = \varphi_n(x)-f_n(x)$ for all $x \in \bigcup_{k=0}^{N(n)}{T_k}$ and we set $g_n(x) = g_n(x^-)$ for all $x \in \bigcup_{k=N(n)+1}^{\infty}{T_k}$, where $x^-$ is the father of $x$.
Notice that the function $g_n: T \to E$ defined this way is a generalized harmonic function such that $\omega_k(g_n) = \omega_{N(n)}(g_n)$ for all $k > N(n)$ and $\rho(\varphi_n - f_n,g_n) < 1/n$.
Since the metric $d$ is translation invariant it follows that the metric $\rho$ is translation invariant, thus $\rho(f_n + g_n, g_n) < 1/n$ for all $n \in \N$.
Since $H(T,E)$ has no isolated points, the sequence $\{f_n + g_n\}_{n=1}^{\infty}$ is dense in $H(T,E)$.
We will now show that the function $f+g = (f_1+g_1,f_2+g_2,\dots)\in H(T,E^\N)$ belongs to the class $X(T,E^\N)$ which completes the proof.
It suffices to show that for every nonempty open basic set $V \subseteq L^0(\partial T,E)$ the set $\{n \in \N: \omega_n(f+g) \in V\}$ has upper density equal to one.
Since convergence in probability in $L^0(\partial T,E^\N)$, which is the same set as $L^0(\partial T,E)^\N$, is equivalent to convergence in the product topology when considering $L^0(\partial T,E)$ with the topology of convergence in probability, a nonempty basic open set of $L^0(\partial T,E^\N)$ can be written as
$$V = V_1 \times \cdots \times V_s \times L^0(\partial T, E) \times \cdots,$$
where $V_1,\dots,V_s$ are nonempty open subsets of $L^0(\partial T, E)$.
We set $L = \max\{N(1),\dots,N(s)\}$.
Then, for all $n \geq L$ and all $j \in \{1,\dots,s\}$ we have $\omega_n(g_j) = \omega_L(g_j)$.
We fix some $n \in\N$ with $n \geq L$ and suppose that $\omega_n(f) \in V - \omega_L(g)$.
Then $\omega_n(f_j) \in V_j - \omega_L(f_j)$ for all $j \in \{1,\dots,s\}$, which implies that $\omega_n(f_j)+ \omega_n(g_j) \in V_j$.
But $\omega_n(f_j + g_j) = \omega_n(f_j) + \omega_n(g_j)$ for all $j \in \{1,\dots,n\}$, so we have that $\omega_n(f_j+g_j) \in V_j$ for all $j \in \{1,\dots,n\}$, which in turn implies that $\omega_n(f+g) \in V$.
Thus, we have proved that
$$\{n \geq L: \omega_n(f) \in V - \omega_L(g)\} \subseteq \{n\in\N: \omega_n(f+g) \in V\}.$$
But the set $V - \omega_L(g)$ is a nonempty open subset of $L^0(\partial T, E^\N)$ and $f \in X(T,E^\N)$, therefore the set $\{n \in \N: \omega_n(f) \in V - \omega_L(g)\}$ has upper density equal to one.
This implies that the set $\{n \geq L: \omega_n(f) \in V - \omega_L(g)\}$ has upper density equal to one and therefore the set $\{n\in\N: \omega_n(f+g) \in V\}$ has upper density equal to one.
\end{proof}

We can now prove the following theorem.

\begin{theorem}
\label{thm:Xalg}
Let $E$ be normed space over $\C$.
The set $X(T,E)\cup\{0\}$ contains a vector space which is dense in $H(T,E)$.
\end{theorem}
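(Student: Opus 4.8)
The plan is to combine the two preceding lemmas, which have been engineered to dovetail exactly. The key realization is that passing to the product space $E^{\mathbb{N}}$ converts the two requirements for algebraic genericity — membership in the class and density — into a single object: one function $f \in X(T,E^{\mathbb{N}})$ whose coordinate functions are simultaneously (a) a spanning set landing inside $X(T,E)\cup\{0\}$, and (b) dense in $H(T,E)$.

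First I would invoke \cref{lem:density} to produce a sequence $\{f_n\}_{n=1}^{\infty}$ that is dense in $H(T,E)$ and for which the assembled function $f = (f_1,f_2,\dots)$ belongs to the class $X(T,E^{\mathbb{N}})$. I would then apply \cref{lem:linearity} to this very same $f$: since $f \in X(T,E^{\mathbb{N}})$, the lemma guarantees that the vector space $\Span\{f_n : n \in \mathbb{N}\}$ is contained in $X(T,E)\cup\{0\}$. This is the candidate vector space for the theorem, and it is a vector space by construction as a linear span.

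It remains only to verify density, which is immediate. Each generator $f_n$ lies in $\Span\{f_n : n \in \mathbb{N}\}$, so this span contains the set $\{f_n : n \in \mathbb{N}\}$, which \cref{lem:density} already asserts is dense in $H(T,E)$. A vector space containing a dense subset is itself dense, so $\Span\{f_n : n \in \mathbb{N}\}$ is dense in $H(T,E)$ and contained in $X(T,E)\cup\{0\}$, as required.

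I do not anticipate a genuine obstacle at this stage, since the difficulty has been front-loaded into the two lemmas: the linearity argument of \cref{lem:linearity} (tracking how products of neighborhoods in the product topology pull back through the linear maps $\omega_n$ while preserving upper density one) and the construction of \cref{lem:density} (modifying an arbitrary element of the nonempty class $X(T,E^{\mathbb{N}})$, which exists by \cref{thm:Xdens}, to have prescribed dense coordinates via the eventually-constant harmonic extensions $g_n$). The final theorem is then a one-line synthesis of these two results.
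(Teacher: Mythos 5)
Your proposal is correct and follows exactly the paper's own argument: apply \cref{lem:density} to obtain $f=(f_1,f_2,\dots)\in X(T,E^{\N})$ with $\{f_n\}_{n=1}^{\infty}$ dense in $H(T,E)$, then apply \cref{lem:linearity} to conclude $\Span\{f_n : n\in\N\}\subseteq X(T,E)\cup\{0\}$, with density immediate because the span contains the dense sequence. No gaps; this matches the paper's proof step for step.
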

\begin{proof}
By \cref{lem:density} there exists some $f = (f_1,f_2, \dots ) \in X(T,E^\N)$ such that the sequence $\{f_n\}_{n=1}^{\infty}$ is dense in $H(T,E)$.
By \cref{lem:linearity} the vector space $\Span\{f_n: n\in \N\}$ is contained in the set $X(T,E) \cup \{0\}$ and the sequence $\{f_n\}_{n=1}^{\infty}$ is contained in the vector space $\Span\{f_n: n\in \N\}$, which implies that vector space $\Span\{f_n: n\in \N\}$ is dense in $H(T,E)$.
\end{proof}

We can now prove the main result of this paper.

\begin{theorem}
Let $E$ be normed space over $\C$.
There exists two vector spaces $F_1,F_2$ contained in $U(T,E)\cup\{0\}$ which are dense in $H(T,E)$ such that $F_1 \cap F_2 = \{0\}$, that is we have double algebraic genericity for the set $U(T,E)$.
\end{theorem}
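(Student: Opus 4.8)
The plan is to assemble the theorem from the three facts already established: algebraic genericity for $U_{FM}(T,E)$ from \cref{thm:UFMalg}, algebraic genericity for $X(T,E)$ from \cref{thm:Xalg}, and the disjointness of these two classes from \cref{prop:disjoint}. First I would invoke \cref{thm:UFMalg} to obtain a vector space $F_1 \subseteq U_{FM}(T,E) \cup \{0\}$ that is dense in $H(T,E)$, and \cref{thm:Xalg} to obtain a vector space $F_2 \subseteq X(T,E) \cup \{0\}$ that is also dense in $H(T,E)$.

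Next I would check that both spaces land inside $U(T,E) \cup \{0\}$. This reduces to the inclusions $U_{FM}(T,E) \subseteq U(T,E)$ and $X(T,E) \subseteq U(T,E)$. Indeed, if $f$ is frequently universal then for every nonempty open $V$ the set $\{n \in \N : \omega_n(f) \in V\}$ has strictly positive lower density and is in particular nonempty; hence $\{\omega_n(f)\}_{n=1}^\infty$ meets every nonempty open set and is therefore dense, so $f \in U(T,E)$. The same reasoning applies to $f \in X(T,E)$, where the relevant hitting set has upper density one and is again nonempty. Thus $F_1, F_2 \subseteq U(T,E) \cup \{0\}$.

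Finally, for the intersection I would argue that $F_1 \cap F_2 \subseteq (U_{FM}(T,E) \cup \{0\}) \cap (X(T,E) \cup \{0\})$, which by \cref{prop:disjoint} equals $\{0\}$ because the nonzero parts $U_{FM}(T,E)$ and $X(T,E)$ are disjoint. As $F_1$ and $F_2$ are vector spaces they both contain $0$, so $F_1 \cap F_2 = \{0\}$, which completes the proof.

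I do not expect any genuine obstacle at this last stage: all of the analytic difficulty was dispatched in establishing \cref{thm:Xalg} via \cref{lem:linearity,lem:density} and the passage to the product space $E^{\N}$, which is the novel ingredient here, together with the previously known \cref{thm:UFMalg}. The only point demanding a moment of care is verifying the two inclusions into $U(T,E)$ and observing that the disjointness of the two classes forces the dense subspaces $F_1$ and $F_2$ to meet only at the origin.
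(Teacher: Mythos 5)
Your proposal is correct and follows essentially the same route as the paper: obtain $F_1$ from \cref{thm:UFMalg}, $F_2$ from \cref{thm:Xalg}, and conclude $F_1 \cap F_2 = \{0\}$ from \cref{prop:disjoint}. The only difference is that you explicitly verify the inclusions $U_{FM}(T,E) \subseteq U(T,E)$ and $X(T,E) \subseteq U(T,E)$, which the paper takes for granted; this is a harmless (indeed welcome) addition.
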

\begin{proof}
By \cref{thm:UFMalg} there exists a vector space $F_1$ contained in $U_{FM}(T,E)\cup\{0\}$, which is a subset of $U(T,E)\cup\{0\}$, that is dense in $H(T,E)$.
By \cref{thm:Xalg} there exists a vector space $F_2$ contained in $X(T,E)\cup\{0\}$, which is a subset of $U(T,E)\cup\{0\}$, that is dense in $H(T,E)$.
By \cref{prop:disjoint} we have that $F_1 \cap F_2 = \{0\}$.
\end{proof}

\section*{\fontsize{11}{15}\selectfont Acknowledgements}
I would like to thank V. Nestoridis for his assistance in my effort of understanding \cite{BIEHLER2} and \cite{BIEHLER1} and also for his remarks and advice concerning this paper.

\printbibliography
\bigskip
\noindent
C.A. Konidas\\
National and Kapodistrian University of Athens\\
Department of Mathematics,\\
e-mail:
\href{mailto:xkonidas@gmail.com}{\tt xkonidas@gmail.com}
\end{document}